\documentclass[12pt,a4paper]{amsart}
\usepackage{url,amssymb,latexsym,bm}
\usepackage{fullpage} 
\newtheoremstyle{plain} 
     {2ex}              
     {2ex}              
     {}         
     {}                 
     {\bfseries}        
     {}                 
     {1ex}              
     {\thmname{#1 }\thmnumber{#2}\thmnote{ \normalfont{(#3)}}.
}
\newtheoremstyle{remark}
     {2ex}              
     {2ex}              
     {}                 
     {}                 
     {\bfseries}        
     {}                 
     {1ex}              
     {\thmname{#1 }\thmnumber{#2}\thmnote{ \normalfont{(#3)}}.
}
\theoremstyle{plain}
\newtheorem{definition}{Definition}[section]

\newtheorem{proposition}[definition]{Proposition}

\newtheorem{theorem}[definition]{Theorem}

\newtheorem{corollary}[definition]{Corollary}

\newtheorem{example}[definition]{Example}

\numberwithin{equation}{section}
\newcommand{\nats}{\mathbb{N}}
\newcommand{\pnats}{\ensuremath{\nats_+}}
\newcommand{\ints}{\mathbb{Z}}

\newcommand{\comps}{\mathbb{C}}
\newcommand{\qproj}{\ensuremath{\mathbb{QP}}}

\newcommand{\cproj}{\ensuremath{\mathbb{CP}}}

\newcommand{\del}{\partial}

\newcommand{\mi}{\mathrm{i}}

\newcommand{\hl}[1]{\textnormal{\textbf{#1}}} 

\newcommand{\gtilde}{\ensuremath{\widetilde{g}}}

\newcommand{\htilde}{\ensuremath{\widetilde{h}}}

\newcommand{\sing}{\ensuremath{\mathcal{V}}}

\newcommand{\xhat}{\ensuremath{\widehat{x}}}

 
\begin{document}
\frenchspacing
\title{A new approach to asymptotics of Maclaurin coefficients of algebraic 
functions}
\author{Alexander Raichev}
\address{Department of Computer Science \\ University of Auckland \\
  Private Bag 92019 \\ Auckland \\ New Zealand}
\email{raichev@cs.auckland.ac.nz}
\author{Mark C. Wilson}
\address{Department of Computer Science \\ University of Auckland \\
  Private Bag 92019 \\ Auckland \\ New Zealand}
\email{mcw@cs.auckland.ac.nz}
\begin{abstract}
We propose a general method for deriving asymptotics of the Maclaurin series
coefficients of algebraic functions that is based on a procedure of K. V. 
Safonov 
and multivariate singularity analysis.
We test the feasibility of this this approach by experimenting on several 
examples.
\end{abstract}
\subjclass{05A15, 05A16}
\keywords{generating function, multivariate, asymptotics, diagonal, rational,
algebraic, resolution of singularities}
\maketitle
\section{Introduction}\label{sec:intro}

Let $\pnats$ denote the set of (strictly) positive natural numbers. 
For $m\in\pnats$, $\beta\in\nats^m$, and $x\in\comps^m$, 
let $x_i$ denote component $i$ of $x$,
let
$\beta!  = \beta_1!\cdots\beta_m!$,
$n\beta = (n\beta_1,\ldots,n\beta_m)$,
$x^\beta = x_1^{\beta_1}\cdots x_m^{\beta_m}$,
and $\del^\beta = \del_1^{\beta_1} \cdots \del_m^{\beta_m}$,
where $\del_j$ is partial differentiation with respect to coordinate $j$.
For a domain $\Xi\subseteq\comps^m$ (an open connected set) containing the 
origin and a holomorphic function $f:\Xi\to\comps$, let 
$f_\beta = \del^\beta f(0)/\beta!$ for $\beta\in\nats^m$, 
the coefficients of the Maclaurin series for $f$.

In \cite{Safo2000} K. V. Safonov proved the following theorem.

\begin{theorem}\label{Safonov}
If $f:\subset\comps^{d-1}\to\comps$ is an algebraic function holomorphic in a 
neighborhood of the origin and every term of the Maclaurin series of $f(x)$ 
contains a factor of $x_1$, then there exists a rational function  
$F:\subset\comps^d\to\comps$ holomorphic in a neighborhood of the origin and a
matrix $M\in GL_{d-1}(\ints)$ with nonnegative entries such that the following 
relation between the Maclaurin series coefficients of $f$ and $F$ holds:
\[
f_\beta = F_{\gamma,\gamma_1},
\] 
where $\beta\in\nats^{d-1}$ and $\gamma = \beta M$.
\end{theorem}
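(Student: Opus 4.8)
The plan is to prove this as a multivariate analogue of Furstenberg's theorem that a univariate algebraic power series is the diagonal of a bivariate rational function, using the distinguished variable $x_1$ as the ``active'' Furstenberg variable and treating $x_2,\dots,x_{d-1}$ as spectators that are carried along unchanged. The hypothesis that every Maclaurin term of $f$ carries a factor of $x_1$ is the multivariate replacement for the one-variable condition $f(0)=0$: it says precisely that $f(0,x_2,\dots,x_{d-1})\equiv 0$, so the branch we follow vanishes along the whole hyperplane $x_1=0$. This is exactly what will let one contour-integrate around a small circle in the function-value variable and localize on $f$.

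Concretely, I would first fix the minimal polynomial $P(w,x)\in\comps[w,x]$ of $f$, so that $P(f(x),x)=0$ with $f$ a simple root over $\comps(x)$. Writing $x'=(x_2,\dots,x_{d-1})$, the vanishing $f(0,x')\equiv 0$ forces $P(0,0,x')\equiv 0$, i.e.\ $P$ has no monomial whose $w$-degree and $x_1$-degree both vanish. Next, for small $x$, where $f(x)$ is the unique root of $P(\cdot,x)$ near the origin, I would use the residue representation $f(x)=\frac{1}{2\pi\mi}\oint_{|w|=\epsilon} w\,\del_w P(w,x)/P(w,x)\,dw$. Introducing one new variable $u$ and performing the monomial substitution $x_1\mapsto wu$ turns this into a coefficient-extraction problem: the degree condition on $P$ guarantees $P(w,wu,x')=w^{\mu}Q(w,u,x')$ with $Q(0,0,0)\neq 0$, and a parallel computation for $\del_w P$ shows that
\[
F(w,u,x')\;=\;\frac{w^{2}\,\del_w P(w,wu,x')}{P(w,wu,x')}
\]
is rational and holomorphic at the origin. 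Expanding $F$ and matching the remaining contour integrals to Cauchy coefficient functionals then yields $f_\beta=F_{(\beta_1,\beta_1,\beta_2,\dots,\beta_{d-1})}$; reading the variables of $F$ in the order $(u,x_2,\dots,x_{d-1},w)$, this is exactly $f_\beta=F_{\gamma,\gamma_1}$ with $\gamma=\beta$, i.e.\ with $M$ the identity, which indeed lies in $GL_{d-1}(\ints)$ and is nonnegative.

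The step I expect to be the main obstacle is the localization in the residue formula: it computes the sum of all roots of $P(\cdot,x)$ that approach $0$ as $x\to 0$, and this equals $f$ only when $f$ is the unique such branch, equivalently when $w=0$ is a simple root of $P(w,0,0)$. For genuinely singular data several branches can collapse to the origin --- for instance both roots of $w^{2}-x_1^{2}(1+x_1)$ vanish there --- and then the naive $F$ records the wrong quantity. To repair this I would precede the construction by a monomial change of coordinates $x_i\mapsto\prod_j x_j^{M_{ji}}$ chosen to separate the coincident branches, in the spirit of a toric resolution of the singularity; each such chart map is given by a nonnegative matrix $M\in GL_{d-1}(\ints)$, and unimodularity is precisely what makes the induced reindexing $\beta\mapsto\gamma=\beta M$ invertible over the integers, so that the relation $f_\beta=F_{\gamma,\gamma_1}$ can be read off. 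Verifying that a single such chart suffices --- that one nonnegative unimodular substitution renders the branch through the origin simple while keeping $F$ rational and holomorphic --- is the crux, and is where I would expect the real work, and the appeal to Safonov's resolution procedure, to lie.
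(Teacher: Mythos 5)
Your outline correctly reconstructs the engine of Safonov's argument, which this paper does not actually reprove but cites from \cite{Safo2000}: a Furstenberg-type residue representation, the substitution $x_1\mapsto x_1x_d$ (your $x_1\mapsto wu$), and coefficient extraction along a diagonal. Indeed, the paper's own Proposition in Section~\ref{sec:comments} confirms that Safonov's $F$ has essentially the shape you derive, namely a rational function built from $y^2\,(\del_d P)(x_1y,x_2,\ldots,x_{d-1},y)/P(x_1y,x_2,\ldots,x_{d-1},y)$. Your diagnosis of the obstacle is also exactly right: the residue localizes on $f$ only when $f$ is the unique simple branch of $P$ vanishing at the origin, and your example $w^2-x_1^2(1+x_1)$ (or $f=x_1\sqrt{1-x_1}$, where the two branches cancel and the naive diagonal is $0$) shows the construction genuinely fails otherwise.

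The genuine gap is that the step you yourself identify as ``the crux'' is never carried out, and closing it by ``the appeal to Safonov's resolution procedure'' is circular: the theorem you are asked to prove \emph{is} the statement that Safonov's procedure works, so its hardest part cannot be quoted as a black box. Moreover, your proposed repair --- a single nonnegative unimodular monomial substitution that renders the branch through the origin simple --- is almost certainly too optimistic as stated. Separating coincident branches in general requires an iterated, Newton--Puiseux-style resolution; this is visible in the paper's annotation of \cite{Safo2000} (multi-stage constructions $R^{(1)}$, $R^{(2)}$, a trial-and-error choice of an exponent $l$), and in the fact that Safonov's $F$ is not your clean formula but has the form ``polynomial $+$ (power of $x$)$\cdot\frac{y^2}{k}\frac{\del_d P(\cdots)}{P(\cdots)}$'', with a multiplicity factor $\frac{1}{k}$ and a polynomial correction term that your sketch cannot produce. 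There is also a smaller unproved claim along the way: holomorphy of your $F$ at the origin requires $Q(0,0,0)\neq 0$ after factoring $P(w,wu,x')=w^{\mu}Q(w,u,x')$, which needs the lowest pure power of $w$ in $P$ to realize the minimal $w$-adic valuation, and this is not automatic from the hypothesis $P(0,0,x')\equiv 0$. In short: right skeleton, matching the cited proof's mechanism, but the resolution-of-singularities core --- the part that produces $M$ and justifies $f_\beta=F_{\beta M,(\beta M)_1}$ in general --- is missing.
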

\noindent
Loosely speaking, every $(d-1)$-variate algebraic power series is a generalized
diagonal of a $d$-variate rational power series.
Moreover, Safonov gave a constructive procedure for finding $F$ given an 
annihilating polynomial for $f$.

In this report we propose a new approach to deriving asymptotics of  
Maclaurin series coefficients of algebraic functions $f$, namely, (1) apply 
Safonov's procedure to $f$ to get a multivariate rational function $F$, then
(2) apply multivariate  singularity analysis to $F$ to derive the asymptotics of
the Maclaurin series coefficients of $F$ in the direction appropriate to $f$.
In this way we reduce deriving asymptotics of algebraic functions to deriving
asymptotics of rational functions.
This is promising, because current general methods for the former task exist 
only in the univariate case (see \cite[Chapter VII]{FlSe2009} for a summary of
these), whereas current general methods for the latter exist in the multivariate
case as well (see \cite{PeWi2008} for a summary of these).

We test the feasibility of this approach by experimenting on several specific 
algebraic functions.
In some cases we meet with difficulty because of gaps in the present theory
of multivariate singularity analysis for rational functions.
We note these gaps and indicate directions for further research.
\section{Examples}\label{sec:examples}

Let us apply our new approach to several specific algebraic functions, beginning
with univariate ones.
Their asymptotics are already known, so we can check our answers.
We used Maple 11 to do the calculations, and our worksheets are available at
\url{http://www.cs.auckland.ac.nz/~raichev/research.html}.

A description of Safonov's procedure, which employs resolution of singularities,
can be found in \cite{Safo2000}, and a description of multivariate singularity 
analysis for rational functions can be found in \cite{RaWi2012} and \cite{PeWi2008}.
We will assume you are familiar with this material. 
Warning: \cite{Safo2000} contains some confusing passages;
see Section~\ref{sec:comments} for a helpful annotation.

\begin{example}[Catalan numbers right-shifted]
Success.

Consider the algebraic function
\[ 
f(x_1) = \frac{1 -\sqrt{1-4x_1}}{2}.
\]
It is the generating function of the Catalan numbers multiplied by $x_1$, and so
its sequence of Maclaurin coefficients is the Catalan sequence prefixed by 0.
We seek asymptotics for $f_n$.

Since the generating function of the Catalan numbers is holomorphic in a
neighborhood of the origin, we may apply Safonov's procedure to $f$.
Doing so, we get that $f_n = F_{n,n}$, where 
\[
F(x_1,x_2) = \frac{x_2 (2x_2 -1)}{x_1 +x_2 -1}.
\]
So the Maclaurin coefficients of $f$ are the diagonal Maclaurin coefficients of
$F$.
Notice that this is always the case for univariate $f$ since $d=1$ implies
$M = (1)$ in Theorem~\ref{Safonov}.
Notice also that in this example $f$ is \hl{combinatorial}, that is, its 
Maclaurin coefficients are all nonnegative, but $F$ is not combinatorial.

To derive asymptotics for $f_n$, we derive asymptotics for $F_{n\alpha}$ with
$\alpha = (1,1)$.
To do this we apply multivariate singularity analysis to $F$.
Letting $H$ equal the denominator of $F$, we find that 
$\sing = \{x\in\comps^2 : H(x)=0\}$ is globally smooth. 
There is one critical point of $F$ for $\alpha$, namely $c=(1/2,1/2)$,
and $c$ is clearly strictly minimal.
Thus by the formulas of \cite{RaWi2012} (\cite{RaWi2012} formulas),
\[
F_{n\alpha} 
= 
4^n \left[ \frac{1}{4 \sqrt{\pi} n^{3/2}} +\frac{3}{32 \sqrt{\pi} n^{5/2}} 
+O\left(n^{-7/2}\right) \right],
\]
as $n\to\infty$, which is correct.
\end{example}

\begin{example}[Supertrees]
Failure.  Critical points not minimal.

Consider the polynomial
\[ 
P(y,x_1) = y^4 -2y^3 +(1 +2 x_1) y^2 -2yx_1 +4x_1^3. 
\]
It is an annihilating polynomial for the combinatorial generating function 
$f(x_1)$ for supertrees (see \cite{FlSe2009} for more details). 
The first few Maclaurin coefficients of $f$ are known to be 
$0, 0, 2, 2, 8, 18, 64, 188$.
Applying Safonov's procedure to $f$ via $P$ we get that $f_n = F_{n,n}$, where 
\[
F(x_1,x_2) 
= 
\frac{ 2x_1 x_2^2 (2x_2^5 x_1^2 -3x_2^3 x_1 +x_2 +2x_2^2 x_1 -1)}
{ x_2^5 x_1^2 -2x_2^3 x_1 +x_2 +2x_2^2 x_1 -2 +4x_1 }.
\]

To derive asymptotics for $f_n$, we derive asymptotics of $F_{n\alpha}$ where
$\alpha = (1,1)$.
Letting $H$ equal the denominator of $F$, we find that $\sing$ is globally 
smooth. 
There are three critical points of $F$ for $\alpha$, namely
$(3/16-(1/16)\sqrt{5}, 1+\sqrt{5})$, $(3/16+(1/16)\sqrt{5}, 1-\sqrt{5})$, and
$(1/8,2)$.
However, none of these points is minimal.
To see this, for each of these points $q$ we compute the first nonzero term of
the Maclaurin series of $\gtilde$ (which depends on $q$; see \cite{RaWi2012} for a 
definition of $\htilde$ and $\gtilde$) and get $a t^4$ with $a<0$.
Thus $\Re\gtilde < 0$ near 0, hence $|\htilde(t)| < |q_2|$ near 0, hence
there exists a point $q'\in\sing$ such that $|q'_1|=|q_1|$ and 
$|q'_2| < |q'_2|$, hence $q$ is not minimal.
So the \cite{RaWi2012} formulas are not guaranteed to work. 

Plugging $c = (1/8,2)$ into the formulas anyway we get
\[
F_{n\alpha} 
= 
4^n \left[ \frac{1-\mi}{8\Gamma(3/4) n^{5/4}} +O\left(n^{-7/4}\right) \right]
\]
which is nonreal and thus incorrect since $f_n \ge 0$.
The correct answer is
\[
F_{n\alpha} 
= 
4^n \left[ \frac{1}{8\Gamma(3/4) n^{5/4}} +O\left(n^{-7/4}\right) \right]
\]
(see \cite[Chapter VI]{FlSe2009} for instance).
\end{example}

\begin{example}[Basic univariate]\label{no_critical}
Failure.  No critical points.

Consider the simple combinatorial algebraic function
\[ 
f(x_1) = \frac{x_1}{\sqrt{1-x_1}}. 
\]
Applying Safonov's procedure to $f$ we get that $f_n = F_{n,n}$, where 
\[
F(x_1,x_2) 
= 
\frac{x_2 x_1 (-3x_2 +3x_2^2 x_1 -2 +2x_2 x_1 +x_1 -2x_2^2 +2x_2^3 x_1)}
{-x_2 +x_2^2 x_1 -2 +2x_2 x_1 +x_1}.
\]

To derive asymptotics for $f_n$ we derive asymptotics for $F_{n\alpha}$ where
$\alpha = (1,1)$.
Letting $H$ equal the denominator of $F$, we find that $\sing$ is globally 
smooth. 
However, there are no critical points of $F$ for $\alpha$.
Thus we can not apply the \cite{RaWi2012} formulas.

We try to get around this as follows.
When $\alpha = (1,\lambda)$ for $0<\lambda<1$, there are two critical points of
$F$, namely, 
$\Big(
\frac{1}{8}\lambda^2 +(\frac{1}{8}\lambda 
-\frac{3}{8})\sqrt{\lambda^2 -10\lambda +9} -\lambda +\frac{7}{8},
\frac{3 -3\lambda +\sqrt{\lambda^2 -10\lambda +9}}{2(\lambda-1)} 
\Big)$ 
and
$\Big(
\frac{1}{8}\lambda^2 -(\frac{1}{8}\lambda 
-\frac{3}{8})\sqrt{\lambda^2 -10\lambda +9} -\lambda +\frac{7}{8},
\frac{3 -3\lambda -\sqrt{\lambda^2 -10\lambda +9}}{2(\lambda-1)} 
\Big)$. 
Plugging the second and first critical points into the \cite{RaWi2012} formulas
and taking the limit as $\lambda\to 1^{-}$ we get 
\[
F_{n\alpha} 
= 
\frac{1}{\sqrt{2\pi n}} +\infty,
\]
and its negative, respectively.
So both expansions are clearly incorrect, but the leading term of the first is
curiously close to that of the correct expansion, which is
\[
F_{n\alpha} 
= 
\frac{1}{\sqrt{\pi n}} +O\left(n^{-3/2}\right)
\]
(see \cite[Chapter VI]{FlSe2009} for instance).

Alternatively, searching for critical points of $F$ for $\alpha=(1,1)$ in 
$\cproj^2$ yields two points, namely $(0,1,0)$ and $(0,0,1)$ in
homogeneous coordinates.
However, mapping these back down to $\comps^2$ eventually leads to a
trivariate generating function with cone point singularities,
a type of singularity for which there is presently no multivariate 
singularity analysis worked out. 
\end{example}

\begin{example}[Basic bivariate]
Success.

Consider the bivariate algebraic function
\[ 
f(x_1,x_2) = x_1 \sqrt{1-x_1-x_2},
\]
which appears in \cite[Example 2]{Safo2000}.
Following Safonov's procedure we get that $f_{n\beta} = F_{n\beta M}$ 
where $M = \big( \begin{smallmatrix} 1 & 0 \\ 1 & 1 \end{smallmatrix} \big)$
and 
\[
F(x_1,x_2,x_3)
=
\frac{x_3 x_1 (3x_3 +2 +x_1 +x_2 x_1 +2x_3^2)}{x_3 +2 +x_1 +x_2 x_1}.
\]

Letting $H$ equal the denominator of $F$, we find that 
$\sing$ is globally smooth. 
There is one critical point of $F$ for $\beta M$, namely
$(\frac{-\beta_1}{\beta_1 +\beta_2}, \frac{\beta_2}{\beta_1}, -1)$.
Since $F$ is not combinatorial, we have no shortcuts for checking whether or not
this point is minimal.
Proceeding anyway and plugging the critical point into the \cite{RaWi2012} formulas we get
\[
F_{n\beta M} 
= 
- \left[ 
\left( \frac{\beta_1 +\beta_2}{\beta_1} \right)^{\beta_1 +\beta_2} 
\left( \frac{\beta_1}{\beta_2} \right)^{\beta_2} 
\right]^n
\left[
\frac{\beta_1}{\sqrt{8}\pi (\beta_1+\beta_2)^2 \sqrt{\beta_1\beta_2} n^2}
+O\left(n^{-3}\right)
\right],
\]
as $n\to\infty$ uniformly as $\beta M$ normalized to length 1 varies within
a compact set in $\qproj^2$.
This expansion agrees with experimental checks.
\end{example}

\begin{example}[Another basic bivariate]
Failure.  No critical points.

Consider the bivariate algebraic function
\[
f(x_1,x_2) = \frac{x_1}{\sqrt{1-x_1-x_2}}.
\]
Following Safonov's procedure we get that $f_{n\beta} = F_{n\beta M}$ 
where $M = \big( \begin{smallmatrix} 1 & 0 \\ 1 & 1 \end{smallmatrix} \big)$
and 
{\tiny
\[
F(x_1,x_2,x_3)
=
\frac{x_3x_1(-3x_3+3x_3^2x_1+3x_3^2x_1x_2-2+2x_3x_1+2x_3x_1x_2+x_1
+x_1x_2-2x_3^2+2x_3^3x_1+2x_3^3x_1x_2)}
{-x_3+x_3^2x_1+x_3^2x_1x_2-2+2x_3x_1+2x_3x_1x_2+x_1+x_1x_2}.
\]
}

Letting $H$ equal the denominator of $F$, we find that 
$\sing$ is globally smooth. 
However there are no critical points of $F$ for $\beta M$.
Thus it is unclear how best to proceed in deriving asymptotics.
It seems we should first understand the no-critical-points phenomenon in the 
simpler bivariate setting of Example~\ref{no_critical}. 
\end{example}
\section{Comments and Questions}\label{sec:comments}

As we see, to implement our new approach for deriving asymptotics of 
Maclaurin coefficients of algebraic functions in full generality, several gaps
in the theory of multivariate singularity analysis of rational functions need to
be filled, namely,
\begin{itemize}
\item How does one check for minimality in the non-combinatorial case?
\item How does one handle non-minimal critical points?
\item How does one handle the case of no critical points?
\end{itemize}
Thus more research is required.

We end with an observation on Safonov's procedure and a short annotation of
\cite{Safo2000}.

\begin{proposition}
Let $F$ be the rational function from Theorem~\ref{Safonov}.
At a critical point of $F$ for 
$\alpha:=(m_1,\ldots,m_{d-1},m_1)\in\pnats^d$ whose last component is nonzero,
the numerator of $F$ vanishes.
\end{proposition}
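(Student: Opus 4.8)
The plan is to trace through Safonov's construction to obtain the numerator and denominator of $F$ explicitly in terms of an annihilating polynomial for $f$, and then to read off the vanishing of the numerator from the critical-point equations. Write $P(y,u_1,\ldots,u_{d-1})$ for an annihilating polynomial of $f$, with $y$ the root variable, $f(0)=0$, and $\del_y P\neq 0$ at the origin, so that $y=f$ is a simple root. Safonov's procedure produces $F=G/H$ in which, after clearing a common factor, the denominator is $P$ composed with a monomial substitution $\sigma$ sending $y$ to $x_d$ and each $u_i$ to a monomial in $x_1,\ldots,x_d$ prescribed by $M$, while the numerator is $(\del_y P)\circ\sigma$ times a monomial. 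Interpreting ``last component nonzero'' as $x_d\neq 0$ at the critical point, the content to be established is that there $(\del_y P)\circ\sigma=0$, whence $G$ vanishes.

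I would first verify the mechanism in the univariate case $d=2$, where $M=(1)$ and a Furstenberg-type computation gives
\[
F(x_1,x_2)=\frac{x_2^{2}\,(\del_y P)(x_2,x_1x_2)}{P(x_2,x_1x_2)}.
\]
Writing $\tilde H=P(x_2,x_1x_2)$ and applying the chain rule, the logarithmic derivatives are $x_1\del_1\tilde H=x_1x_2\,(\del_u P)$ and $x_2\del_2\tilde H=x_2\,(\del_y P)+x_1x_2\,(\del_u P)$, both evaluated at $(x_2,x_1x_2)$. For $\alpha=(m_1,m_1)$ the critical-point condition is that the logarithmic gradient $(x_1\del_1\tilde H,x_2\del_2\tilde H)$ be parallel to $(1,1)$, i.e. $x_1\del_1\tilde H=x_2\del_2\tilde H$; subtracting, the $\del_u P$ terms cancel and leave $x_2\,(\del_y P)(x_2,x_1x_2)=0$. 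Since $x_2\neq 0$ at the critical point, $(\del_y P)(x_2,x_1x_2)=0$, and as the numerator of $F$ is a monomial multiple of this quantity it vanishes.

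For general $d$ the same cancellation should occur. The logarithmic gradient $(x_1\del_1 H,\ldots,x_d\del_d H)$ is the image, under the integer exponent matrix of $\sigma$, of the vector of logarithmic derivatives of $P$, and the extra coordinate $x_d$ is the only one carrying the entry built from $\del_y P$. The force of the hypothesis $\alpha=(m_1,\ldots,m_{d-1},m_1)$ is that its first and last components agree, reflecting the identification of coordinates $1$ and $d$ in the diagonal $f_\beta=F_{\gamma,\gamma_1}$; imposing that the logarithmic gradient be parallel to this $\alpha$ should annihilate exactly the $\del_y P$ entry, giving $(\del_y P)\circ\sigma=0$ once $x_d\neq 0$, and hence $G=0$.

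The main obstacle is making the explicit form of $F$ precise for multivariate $f$, since Safonov's procedure passes through a resolution of singularities: the substitution $\sigma$, the exponent matrix relating the logarithmic gradient of $H$ to that of $P$, and the common factor cleared between $G$ and $H$ must all be extracted from that construction and shown to have the stated shape. In particular one must check that the cleared factor is a monomial, so that the reduced numerator still vanishes wherever $(\del_y P)\circ\sigma$ does, and that the linear algebra of the exponent matrix genuinely isolates the $\del_y P$ entry in the direction $\alpha$. Establishing these structural features of Safonov's output, rather than the final critical-point computation, is where the real work lies.
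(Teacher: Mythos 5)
Your core cancellation is exactly the paper's: with the first and last components of $\alpha$ equal, the pair of critical equations for coordinates $1$ and $d$ kills the $\del_u P$ terms and leaves $x_d\,(\del_y P)\circ\sigma=0$, so a numerator that is a monomial multiple of $(\del_y P)\circ\sigma$ vanishes once $x_d\neq 0$. Your $d=2$ computation is, up to reordering the arguments of $P$, identical to the computation in the paper's proof. The genuine gap is exactly the part you defer as ``where the real work lies'': the structural form of Safonov's output for general $d$. The form you conjecture --- denominator $P\circ\sigma$ with $\sigma$ a monomial substitution ``prescribed by $M$'', applied to an annihilating polynomial assumed to have a simple root at the origin --- is not established, and it is off target in two respects. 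First, assuming $\del_y P\neq 0$ at the origin assumes away precisely the degenerate cases (e.g.\ the supertrees example, where the root at the origin is multiple) for which Safonov's resolution step is needed; the polynomial entering the final formula is the output of that resolution, not the original annihilating polynomial. Second, for a general exponent matrix your claim that parallelism of the logarithmic gradient to $\alpha$ ``should annihilate exactly the $\del_y P$ entry'' is not something that holds for an arbitrary monomial substitution; it works only because of the very special shape the substitution actually has.

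The paper closes this hole by quoting the form delivered by Safonov's procedure: $F(x)$ is a polynomial in $x$ plus a power of $x$ times
\[
R(\xhat,y)=\frac{y^2}{k}\,
\frac{(\del_d P)(x_1y,x_2,\ldots,x_{d-1},y)}{P(x_1y,x_2,\ldots,x_{d-1},y)},
\qquad y=x_d,\quad k\in\pnats,
\]
where $P$ is some polynomial. The two facts you were missing are therefore: (i) the substitution couples $y$ only with $x_1$ (as $x_1y$) and leaves $x_2,\ldots,x_{d-1}$ untouched, so no linear algebra with $M$ enters at all --- the general case is literally your two-variable cancellation run on the coordinate pair $(1,d)$, using just two of the critical equations; and (ii) the additive polynomial part is harmless, because the denominator vanishes at any critical point, so the numerator of $F$ reduces there to a monomial times the numerator of $R$. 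With those two inputs your argument is complete; without them it proves the proposition only for $d=2$.
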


\begin{proof}
From Safonov's procedure, $F(x)$ has the form ``$\text{polynomial in $x$} + 
(\text{power of $x$})\cdot R(x)$'', where 
\[ 
R(\xhat,y) 
= 
\frac{y^2}{k}
\frac{(\del_d P)(x_1y,x_2,\ldots,x_{d-1},y)}
{P(x_1y,x_2,\ldots,x_{d-1},y)}, 
\]
$y = x_d$ (for easy reading), $k\in\pnats$, $P$ is a 
polynomial, and the variable substitution is understood to occur after the 
partial differentiation.
By definition, the denominator of a rational function vanishes at a critical 
point.
Thus by the specific form of $F$ above, it suffices to prove the proposition for
the rational function $R(\xhat,y)$.

Let $J$ be the denominator of $R$.
Among the critical equations of $R$ for $\alpha$ are
\begin{align*}
J(\xhat,y)
&= 
0  \\
\alpha_1^{-1} x_1 \del_1 J(\xhat,y)     
&= 
\alpha_d^{-1} y \del_d J(\xhat,y),
\end{align*}
that is,
\begin{align*}
P(x_1y,x_2,\ldots,x_{d-1},y)
&= 
0  \\
x_1 y (\del_1 P)(x_1y,x_2,\ldots,x_{d-1},y)     
&= 
y \left[ x_1 (\del_1 P)(x_1y,x_2,\ldots,x_{d-1},y) 
+(\del_d P)(x_1y,x_2,\ldots,x_{d-1},y) \right].
\end{align*}
Thus at a critical point where $y\neq 0$, 
$P(x_1y,x_2,\ldots,x_{d-1},y) = 0$ and 
$(\del_d P)(x_1y,x_2,\ldots,x_{d-1},y) = 0$, as desired.
\end{proof}

\begin{corollary}\label{zero_term}
For a critical point of $F$ whose last component is nonzero, the first term in 
the \cite{RaWi2012} formulas for the asymptotic expansion of $F$ is zero.
\end{corollary}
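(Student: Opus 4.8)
The plan is to read the leading amplitude off the \cite{RaWi2012} smooth-point formula, observe that it carries the numerator of $F$ evaluated at the critical point as a factor, and then invoke the Proposition to annihilate that factor.

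First I would recall the shape of the expansion. In the globally smooth setting of \cite{RaWi2012}, write $F = G/H$ with $G$ the numerator of $F$ and $H$ its denominator (so that $\sing = \{H = 0\}$). The coefficient asymptotics in direction $\alpha$ at a critical point $c$ then take the form
\[
F_{n\alpha} = c^{-n\alpha}\, n^{-(d-1)/2}\left[ a_0 + a_1 n^{-1} + a_2 n^{-2} + \cdots \right],
\]
in which the leading amplitude $a_0$ is \emph{linear} in $G(c)$, schematically $a_0 = G(c)\cdot\Phi(c)$, where $\Phi(c)$ is a factor assembled from the first and second logarithmic derivatives of $H$ at $c$. (This is exactly why, for instance, the Catalan expansion begins at order $n^{-3/2}$ rather than the generic $n^{-1/2}$: its $a_0$ already vanishes.) Next I would apply the Proposition: because the last component of $c$ is nonzero by hypothesis, the Proposition gives $G(c) = 0$, whence $a_0 = G(c)\cdot\Phi(c) = 0$. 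This is precisely the claim that the first term of the \cite{RaWi2012} expansion vanishes.

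The one point requiring care — and the step I expect to be the main obstacle — is confirming that the factor $\Phi(c)$ is finite at the critical points in question, so that $a_0$ is a genuine zero and not an indeterminate product $0\cdot\infty$. This amounts to checking that the partial derivative of $H$ used to normalize the formula is nonzero at $c$ and that the Hessian of the associated phase is nondegenerate there. Both are built into the hypotheses under which the \cite{RaWi2012} formula is valid — global smoothness of $\sing$ yields $\nabla H(c)\neq 0$, and the formula is derived under a nondegenerate stationary-phase assumption — so $\Phi(c)$ is finite and the corollary follows.
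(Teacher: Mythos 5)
Your proposal is correct and follows essentially the same route as the paper: the paper likewise cites the \cite{RaWi2012} smooth-point formula (their Theorem 3.4) to note that the leading term contains the numerator of $F$ evaluated at the critical point as a factor, and then applies the Proposition to conclude that factor vanishes. Your additional check that the remaining factor $\Phi(c)$ is finite (so the product is genuinely zero) is a sensible refinement the paper leaves implicit, but it does not change the argument.
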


\begin{proof}
By \cite[Theorem 3.4]{RaWi2012}, 
the first term in the \cite{RaWi2012} formulas for the asymptotic expansion of $F_{n\alpha}$ 
contains a factor of the numerator of $F$ evaluated at the critical point.
Thus by the previous proposition this term is zero.
\end{proof}

In retrospect we should have expected no critical points in 
Example~\ref{no_critical}, for in the presence of a critical point the 
\cite{RaWi2012} formulas and Corollary~\ref{zero_term} predict no $n^{-1/2}$ term in the expansion for $F_{n\alpha}$ whereas the correct expansion does have an $n^{-1/2}$ term.

\center{\textbf{Annotation to \cite{Safo2000}}}
\begin{itemize}
\item page 261, line 4 of the main body of the article:
  $z$ should be $z_1$
\item page 263, lines 2--5: it appears that Safonov is defining his term
  `A-diagonal' here
\item 264, 6: $\widetilde{R}$ should be $\widetilde{R}(w,z)$
\item 266, 1: `of the degree $q$' should be `of some degree $q$'
\item 266, 7: $\zeta^{m_n}$ should be $\zeta_n^{m_n}$ and the remark
  `for some $m_1,\ldots,m_n\in\nats$' should be added
\item 267, 12: the asterisks should be to the left of the rightmost parenthesis
\item 269, 9: $u(\zeta)$ should be $u(\upsilon,\zeta)$
\item 269, last line: here Safonov states that there exists such an $l$ but does
  not explain how to find it; it seems one need to look at a few terms of the
  Maclaurin expansions of both $h$ and $a$ and find the $l$ that works by trial
  and error; note that $l$ is needed for the definition of $R^{(2)}$ on page 270
\item 270, 2: $\zeta_1^{\nu_{n1}}$ should be $\zeta_1^{\nu_{1n}}$
\item 270, 11: $h_1(\zeta)$ should be $h^{(1)}(\zeta)$
\item 270, 13: the leftmost occurrence of $R^{(1)}(z_0,z)$ should be 
  $R^{(2)}(z_0,z)$
\item 270, 13: $z_n^{l_2}$ should be $z_n^{l_n}$
\item 271, 7: one of the $(pk)!$s should be a $(qk)!$
\end{itemize}
\bibliographystyle{amsalpha}    
\bibliography{combinatorics}
\end{document}